\theoremstyle{definition}
\newtheorem{theorem}{Theorem}[section]
\newtheorem{lemma}[theorem]{Lemma}
\newtheorem{remark}[theorem]{Remark}
\newtheorem{definition}[theorem]{Definition}
\newtheorem{corollary}[theorem]{Corollary}
\title{Finiteness properties and Relatively Hyperbolic Groups}
\date{\vspace{-5ex}}
\author{Harsh Patil}
\begin{document}
\maketitle
\begin{abstract}
    We show that properties $F_n$ and $FP_n$ hold for a relatively hyperbolic group if and only if they hold for all the peripheral subgroups. As an application we show that there are at least countably many distinct quasi-isometry classes of one-ended nonamenable groups that are type $F_n$ but not $F_{n+1}$ and similarly of type $FP_n$ and not $FP_{n+1}$ for all positive integers $n$.  
\end{abstract}
\section{Introduction} 
\footnote{AMS Subject Classification Codes: Primary: 20J05, 20J06, 20F65, 20F67, 20F69}
Relatively hyperbolic groups were introduced by Gromov in \cite{gromov1987hyperbolic} and subsequently expanded upon in \cite{farb1998relatively},\cite{bowditch2012relatively},\cite{osin2006relatively}. Examples include fundamental groups of cusped hyperbolic manifolds (with respect to the collection of cusped subgroups), non-cocompact lattices in Lie groups of real rank 1 and geometrically finite convergence groups acting on non-empty perfect
compact metric spaces (with respect to the set of the maximal
parabolic subgroups \cite{Asli}).
\par
Many geometric and algorithmic properties about a relatively hyperbolic group $G$ can be deduced from the corresponding properties for the peripherals. For example: finiteness of asymptotic dimension \cite{osin2005asymptotic}, solvability of the word problem\cite{farb1998relatively}, solvability of the conjugacy problem\cite{Bumagin2003TheCP}.
\par
 In \cite{Wall}, Wall introduced the property $F_n$ for $n\geq 1$ as a generalization of the classical finiteness properties: finite generation ($F_{1}$) and finite presentability ($F_{2}$). A group is said to be of type type $F_n$ if it acts freely, properly discontinuously, cocompactly, and cellularly on a $(n-1)$-connected CW-complex. A group is said to be of type $F$ if it admits a finite classifying space. 
\par
A group is said to be of type $FP_n$ if the trivial $G$-module $\mathbb{Z}$ admits a projective resolution which is finitely generated in all dimensions up to $n$. For a group of type $F_n$ such a projective resolution can be arrived at by taking the chain complex of the universal cover of some classifying space with finite $n$-skeleton. The first example of a group of type $F_{2}$ but not $F_3$ was exhibited by Stallings \cite{Stallings}. This was generalized by Bieri \cite{Bieri1976NormalSI} to give examples of groups that are $FP_{n}$ but not $FP_{n+1}$ for all $n>1$. Abels-Brown \cite{ABELS198777} constructed examples of solvable groups that satisfy property $FP_{n}$ but not $FP_{n+1}$. A very general construction was then given by Bestvina-Brady\cite{Bestvina1997MorseTA}, who were the first to give examples of groups that are $FP_{n}$ but not $F_{n}$ for each $n\geq2$. 
\par
 The goal of the present paper is to prove the following theorem:
\begin{theorem}\label{main}
Let $G$ be a group and $H<G$ be a subgroup such that $G$ is relatively hyperbolic with respect to $H$. Then,
\begin{enumerate}
    \item \label{part1}$G$ is of type  $FP_n$ if and only if $H$ is of type $FP_n$.
    \item \label{part2}$G$ is of type  $F_n$ if and only if $H$ is of type $F_n$.
\end{enumerate}
\end{theorem}
\begin{remark}
    The analogous statement for more than one peripheral subgroups can be proved similarly. We restrict ourselves to one peripheral subgroup for the sake of clarity. 
\end{remark}
In \cite{osin2006relatively}, Osin proved that a relatively hyperbolic group admits a finite relative presentation. It follows that if all of the peripheral subgroups are finitely presented then the relatively hyperbolic group is also finitely presented. A relatively hyperbolic group quasi-retracts on any peripheral subgroup \cite[Corollary 4.32]{Dahmani2011HyperbolicallyES}. Thus by a well-known result of Alonso \cite{Alonso1994FinitenessCO}, it follows that every peripheral subgroup of a relatively hyperbolic group of type $FP_{n}$ (resp. $F_{n}$) is of type $FP_{n}$ (resp. $F_{n}$).\par
It is known that a finitely presented group is of type $F_{n}$ if and only if it is of type $FP_{n}$ \cite{Brown1987FinitenessPO}. It follows that a relatively hyperbolic group is of type $F_{n}$ if it is of type $FP_{n}$ and all its peripheral subgroups are finitely presented. Thus, to establish Theorem 1.1, it suffices to show that a relatively hyperbolic group is of type $FP_{n}$ if all of its peripheral subgroups are of type $FP_{n}$. This is what we prove here. We achieve this by adapting Dahmani's arguments from \cite{Dahmani}, where it is shown that if a group $G$ is torsion-free and relatively hyperbolic with respect to a subgroup $H$ of type $F$, then $G$ is of type $F$ as well. 
Brown's Finiteness Criterion \cite{Brown1987FinitenessPO} is the main tool that we use to establish property $FP_{n}$. 
We obtain the following result as a corollary to Theorem \ref{main}:
\begin{corollary}\label{humecordes}
     For given $n>0$, there are countably many distinct quasi-isometry types of nonamenable one-ended groups that are of type $FP_n$ ( resp. type $F_{n}$) but not $FP_{n+1}$ (resp. type $F_{n+1}$). 
\end{corollary}
\remark Without the assumption of being one-ended the above result is not hard to prove. Indeed one can take free products $G*H$ where $G$ is chosen to be of type $FP_n$ but not $FP_{n+1}$ and $H$ is a one-ended hyperbolic group. By Theorem 0.4 of \cite{Papasoglu2002QuasiisometriesBG} any two such groups $G*H$ and $G*H'$ are quasi-isometric if and only if $H$ and $H'$ are quasi-isometric. \par
\remark Countably many distinct quasi-isometry classes of amenable groups that are $F_{n}$ (resp. $FP_{n}$) but not $F_{n+1}$ (resp. $FP_{n+1}$) have already been constructed in the literature. Here we outline an example from \cite{SantosRego_2022}. Given a finite set $S$ of primes, one considers the subring $O_{S}$ of $\mathbb{Q}$ that consist of rationals $\frac{a}{b}$, $a,b\in \mathbb{Z}$ relatively prime, such that the prime factors of $b$ are contained in $S$. Let $A_{n}(O_{S})$ denote the group of upper triangular  matrices of size $(n+2)\times (n+2)$ such that the first and the last entry on the diagonal are both equal to 1. By \cite[Proposition 4.10]{SantosRego_2022}, $A_{n}(O_{S})$ is of type $F_{n-2}$ but not $F_{n-1}$ whenever $k=|S|>(n-1)$. By a result of Serre, every finitely generated subgroup of $\textrm{GL}_{n}(\mathbb{Q})$ has finite virtual cohomological dimension. Moreover, $vcd(A_{n}(O_{S}))\geq kn$ since $\mathbb{Z}^{kn}$ embeds as a subgroup in $A_{n}(O_{S})$. Consequently, one can form a sequence of groups that are $F_{n}$ but not $F_{n+1}$ with increasing $vcd$ by choosing larger and larger sets of primes. These groups would be quasi-isometrically distinct as $vcd$ is known to be a quasi-isometry invariant for amenable groups \cite[Theorem 1.5]{Shalom2004HarmonicAC}. 
 \section{Preliminaries}
\subsection{Relative Hyperbolicity}
We recall standard notions from the theory of relatively hyperbolic groups. The interested reader can refer to \cite{farb1998relatively}, \cite{bowditch2012relatively}, \cite{osin2006relatively}.  
\begin{definition}
Let $G$ be a finitely generated group and $H$ be a subgroup of $G$. Let $S$ be some finite generating set of $G$. Let $\Gamma(G,S)$ denote the Cayley graph of $G$ with respect to $S$. If we assign each edge to have unit length then $\Gamma(G,S)$ is a metric space equipped with the path metric. 
\par We form a new graph $\hat{\Gamma}(G,S,H)$ as follows: Start with the Cayley graph $\Gamma(G,S)$ and for each coset $gH$ of $H$ add a vertex $v(gH)$, and add an edge of length $\frac{1}{2}$ between $v(gH)$ and each element $gh$ of $gH$.  
\par This new graph is called the \emph{coned-off Cayley graph} of the pair $(G,H)$ with respect to the generating set $S$.  $\hat{\Gamma}(G,S,H)$ is equipped with the path metric which we denote by $\hat{d}$. 
\end{definition}
\begin{definition}A group $G$ is said to be \emph{weakly relatively hyperbolic} with respect to a subgroup $H$ if the coned-off Cayley graph $\hat{\Gamma}(G,S,H)$ is Gromov hyperbolic. 
\end{definition}
\begin{definition} Let $G$ be a group and $H$ be a subgroup. 
   Let $c:[0,l]\rightarrow \Gamma(G,S)$ be a path in the Cayley graph of $G$. Its image $\hat{c}$ in the coned-off graph $\hat{\Gamma}(G,S,H)$ is constructed by replacing every maximal subpath $c|_{[a,b]}$ all of whose vertices lie in a same coset $gH$ by a path of length $1$ from $c(a)$ to $c(b)$ passing through the cone point $v(gH)$. Let $l'$ denote the length of $\hat{c}$. $c$ is said to be a \emph{relative geodesic} if $\hat{c} $ is a geodesic in $\hat{\Gamma}(G,S,H)$. The length of $\hat{c}$ is said to be the \emph{relative length} of $c$. Let $T>1$. The path $c$ is called a $T$-relative quasi-geodesic if $\hat{c}$ is a $T$-quasi-geodesic i.e.,
   $$T^{-1}|t-t'|\leq \hat{d}(\hat{c}(t),\hat{c}(t'))\leq T|t-t'|\:\:\:\forall t,t'  \in [0,l'] $$
   A path $c$ is said to be a \emph{relative quasi-geodesic} if it is a $T$-relative quasi-geodesic for some $T>0$.
\end{definition}
A path $c:[a,b]\rightarrow \Gamma(G,S)$ is said to \textit{travel in a coset $\gamma H$ for less than $r$} if for any maximal subsegment of $c$ all of whose vertices lie in $\gamma H$, the endpoints are at a distance less than $r$. A path $c$ is said to \emph{travel in a coset $\gamma H$ for more than $r$} if there exists a subsegment of $c$ with vertices in $\gamma H$ such that its endpoints are at least at a distance $r$ from each other. 
\begin{definition}[Bounded coset penetration]
  Let $G$ be a group and $H$ be a subgroup. The pair $(G,H)$ is said to satisfy the BCP (Bounded Coset Penetration) property if for every $T>0$ there exists an $r=r_{BCP}(T)$ such that for every pair of $T$-relative quasigeodesics $c_1$,$c_2$ starting and ending at the same point in $\Gamma(G,S)$ satisfy the following two conditions: 
  \begin{enumerate}
      \item If $c_1$ travels more than $r$ in a coset then $c_2$ enters the same coset.
      \item If $c_1$ and $c_2$ enter the same coset then the entry points (resp. exit points) of $c_1$ and $c_2$ are at most $r$ distance apart. 
  \end{enumerate}
\end{definition}
\begin{definition}[Relative hyperbolicity]
A group is said to be relatively hyperbolic with respect to a subgroup $H$ if it is weakly relatively hyperbolic and satisfies the BCP property.
\end{definition}
\subsection{Finiteness Properties of Groups}

\subsubsection{Brown's Criterion for Finiteness}
In the proof of the “if” part of Theorem \ref{main}, we will make use of Brown’s criteria for finiteness, which we now state for the reader’s convenience.
\begin{definition}
    Let $X$ be a CW-complex which admits a free cellular action by $G$. $X$ is called $n$-\emph{good} if 
    \begin{enumerate}
        \item  $X$ is acyclic in dimensions less than $n$ (i.e. the reduced homology $\Tilde{H}_k(X)$ is $0$ for $ k<n$), and
        \item For $0\leq p<n$, the stabilizer $G_{\sigma}\leq G$ of any $p$-cell $\sigma$ is of type $FP_{n-p}$. 
    \end{enumerate} 
\end{definition}
A \emph{filtration} of $X$ is a family of $G$-invariant sub-complexes $\{X_{\alpha}\}_{\alpha\in D}$ such that $D$ is a directed set and such that $X_{\alpha}\subseteq X_{\beta}$ whenever $\alpha \leq \beta$. When the $X_\alpha$ have a finite $n$-skeleton mod $G$, the filtration will be said to have \emph{finite $n$-type}. 
A filtration $\{X_{\alpha}\}$ of $X$ is said to be $\Tilde{H}_{k}$-\emph{essentially trivial} if for each $\alpha\in D$ there exists a $\beta\in D$ such that $\beta\geq\alpha$ and the inclusion map $i_{\alpha,\beta}:X_{\alpha}\rightarrow X_{\beta}$ induces the trivial map on the homology i.e the map $\tilde{H}_{k}(i_{\alpha,\beta}):\Tilde{H}_{k}(X_{\alpha})\rightarrow \Tilde{H}_{k}(X_{\beta})$ is the zero map. 
We will need the following theorem from \cite{Brown1987FinitenessPO}. 
\begin{theorem}\label{Brown}
Let $X$ be a CW complex which admits a cellular action by $G$. Let $X$ be $n$-good and $\{X_{\alpha}\}$ be a filtration of finite $n$-type. Then $G$ is of type $FP_n$ if and only if the filtration $\{X_{\alpha}\}$ is $\tilde{H}_k$-essentially trivial for each $k<n$.  
\end{theorem}

\subsection{The relative Rips complex}
We will need the following construction from \cite{Dahmani}. Let $G$ be relatively hyperbolic with respect to $H<G$. Let $S$ and $T$ be generating sets of $G$ and $H$, respectively such that $T\subseteq S$. Let $d_T$ denote the word metric on $H$ with respect to $T$.
\begin{definition}\label{relativeRips}
    For $r,d,s>0$ the \emph{relative Rips complex} $Rips_{r,d,s}(G,H)$ is the flag simplicial complex $Y$ defined as follows:
\begin{enumerate}
    \item $Y^{(0)}=G$.
    \item There is an edge between $x,x'\in Y^{(0)}$ if at least one of the following conditions is satisfied: 
    \begin{enumerate}
        \item $x$ and $x'$ lie in the same coset of $H$ and $d_{T}(1,x^{-1}x')\leq s$
        \item there is a relative geodesic $c:[0,l]\rightarrow \Gamma(G,S)$ with endpoints $x,x'$  such that the relative length of $c$ is at most $d$ and it travels less than $3r$ in the first and the last coset and less than $2r$ in any other coset.  
    \end{enumerate}  
   \item A set $\{x_0,x_1,\cdots, x_n\}\subset Y^{(0)}$ spans an $n$-simplex if it forms a complete graph in $Y^{(1)}$.
\end{enumerate}
\end{definition}
\par Let $Rips(G)$ be the infinite simplex on $G$ i.e the simplicial complex with $G$ as its $0$-skeleton and in which every $(n+1)$-element subset of $G$  spans a $n$-simplex.
$Rips(G)$ is contractible and admits a diagonal $G$-action. Moreover, the stabilizer of any simplex is a subgroup of the group of all permutations of the vertices and hence finite. Consequently, $Rips(G)$ is $n$-good for every $n\in \mathbb{N}$.
Varying over different values of $r,d$ and $s$ we obtain the filtration $Rips(G)=\bigcup_{(r,d,s)}Rips_{r,d,s}(G,H)$. The indexing set $\{(r,d,s)|r,d,s>0\}$ is equipped with the lexicographic order. For each $ r,d,s>0$, the complex $Rips_{r,d,s}(G,H)$ is locally finite and finite dimensional. The $G$-orbit of every simplex $\sigma$ in $Rips_{r,d,s}(G,H)$ contains a simplex $\sigma'$ that contains the identity element in its 0-skeleton. It follows that the action of $G$ on  $Rips_{r,d,s}(G,H)$ is cocompact and that the filtration \newline$Rips(G)=\bigcup_{(r,d,s)}Rips_{r,d,s}(G,H)$ is of finite $n$-type. Under the assumptions of Theorem \ref{main} we show in section 3 that the above mentioned filtration is $\Tilde{H}_{k}$-essentially trivial.  \par
Let $x$ and $x'$ be two points lying in distinct cosets of $H$ such that there is an edge $e$ joining $x$ and $x'$ in $Rips_{r,d,s}(G,H)$. By definition, there exists a relative geodesic $c$ such that the condition 2.(b) in \ref{relativeRips} are satisfied. Such a relative geodesic $c$ is said to be a path \textit{associated to the edge} $e$. Let $c:[a,b]\rightarrow \Gamma(G,S)$ be a path such that its image has non-trivial intersection with the coset $\gamma H$. We say that a vertex $v$ of $\Gamma(G,S)$ is the \textit{entering point} of $c$ in the coset $\gamma H$ if $v = c(t) \in \gamma H$ for some $t\in[a,b]$ and $ c(t')\notin \gamma H$ for all $t'<t$.  We say that a path associated to an edge is \textit{reduced} in a coset if its entering point is also its exiting point. In such a case we say that the corresponding edge is reduced with respect to the same coset. The exiting point of $c$ with respect to a coset $\gamma H$ is defined analogously. 
\remark\label{copy} Note that there exists an edge between two elements $gh,gh'$ of a given coset $gH$ if and only if $d_{T}(h,h')\leq s$. Using this observation and the fact that the subcomplex spanned by the coset is flag, one concludes that the subcomplex spanned by each coset $gH$ is canonically isomorphic to the usual Rips complex $Rips_{s}(H)$.
 
\section{Proof of the Main Theorem}

\textit{Idea of the Proof:}
The proof follows the of Lemma 2.2 in \cite{Dahmani} and uses several lemmas from the same. The goal is to show that the filtration of the relative Rips complex outlined in section 2.3 is $\tilde{H}_k$-essentially trivial. Starting with a non-trivial element of $\alpha \in H_{k}(Rips_{r,d,s}(G,H))$ we write it as a sum of elements $\sum\alpha_i$ such that each $\alpha_i$ is supported on a complex spanned by a coset. Once this is achieved we can appeal to the fact that the filtration $ Rips(H)=\bigcup_{r}Rips_{r}(H)$ is  $\tilde{H}_{k}$-essentially trivial and that the subcomplex spanned by each coset is a copy of $Rips_{s}(H)$ . Thus choosing a large enough value $s'>s$ will kill each $\alpha_i$ individually. 
\par
We start by stating a few lemmas from \cite{Dahmani} which will be used later. 
Let $\delta>0$ be the hyperbolicity constant of the coned-off Cayley graph $\hat{\Gamma}(G,S,H)$. 
Let $K$ be an arbitrary finite sub-complex of $Rips_{r,d,s}(G,H)$ for $d>4\delta +2$ and $r>r_{BCP}(4d)$. Fix a base-point $\gamma_b\in K^{(0)}$. Let $\gamma_{0}\in K^{(0)}$ be such that it maximizes the relative distance from $\gamma_b$. Let $\mathcal{F}$ denote the set of all relative geodesics joining $\gamma_{b}$ to a vertex of $K$ which are geodesics in the coset $\gamma_{b}H$. Let $\mathcal{S}_{1}$ be the set of vertices of $\gamma_{b}Rips_{s}(H)$ that either belong to $K$ or to the image of an element of $\mathcal{F}$. Let $S_{2}$ be the set of all $\gamma$ such that an element of $\mathcal{F}$ enters $\gamma H$. Let $\mathcal{S}_{2}$ denote the union of vertices of the subcomplexes $\gamma Rips_{s}(H)$ such that $\gamma\in S_{2}$. Let $\mathcal{V}=\mathcal{S}_{1}\cup \mathcal{S}_{2}$. Let $V$ be the subcomplex spanned by elements in $\mathcal{V}$.   
All subcomplexes of $Rips_{r,d,s}(G,H)$ mentioned below are in fact subcomplexes of $V$. All homotopies between the subcomplexes mentioned below take place inside $V$.
 \begin{lemma}\label{lemma_1}\cite[Lemma 5.1, Corollary 5.1]{Dahmani}
    Let $\gamma_{(0)}^1,\gamma_{(0)}^2\in K^{(0)}$ be vertices of $\gamma_{0}Rips_{s}(H)$. Further assume that $\gamma_{(0)}^1,\gamma_{(0)}^2$ are endpoints of edges $e,e'\in K^{(1)} $  respectively such that $e$ and $e'$ are edges exiting $\gamma_{0} Rips_{s}(H)$ and $e,e'$ are reduced in the coset $\gamma_{0}H$.
    Then $d_{T}(\gamma_{(0)}^1,\gamma_{(0)}^2)\leq r$. Consequently, all such vertices are contained in a simplex $\sigma_{r}$. 
 \end{lemma}
 Here onwards $\sigma_{r}$ denotes the maximal simplex whose vertex set contains the set of endpoints of edges that exit $\gamma_{0} Rips_{s}(H)$ and are reduced in the coset $\gamma_{0}H$. 
 \begin{lemma}\label{lemma_2}\cite[Lemma 5.3, Corollary 5.2]{Dahmani}
    Let $e=(\gamma,\gamma')$ be an edge of $K$ such that $\gamma\in \gamma_{0}H$ and $\gamma'\notin\gamma_{0}H $. Let $\gamma_{r}\in\gamma_{0}H$ be the exiting point from $\gamma_{0}Rips_{s}(H)$ of a path associated to $e$. Let $K_{1}=St_{K}(e)$ be the open star of $e$ in $K$. Let $K_{1}'$ be the union of all simplices of vertices in $K$ that contain the 2-simplex $(\gamma, \gamma',\gamma_{r})$. Let $K'=K\backslash St_{K}(e)$. Then $K_1\subset K_{1}'$. Moreover the inclusion $K'\cap K_{1}'\xhookrightarrow{} K_{1}'$ is a homotopy equivalence.  
 \end{lemma}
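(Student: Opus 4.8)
The plan is to split the statement into a combinatorial part, which encodes the geometry of relative hyperbolicity, and a purely topological cone argument. Throughout I work inside the subcomplex spanned by the coset $\gamma_0 K$, which is a copy of $Rips_s(K)$, and I use that this complex is flag: a set of vertices spans a simplex as soon as they are pairwise joined by edges.

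First I would establish the containment $K_1 \subseteq K_1'$. The point is that the single vertex $\gamma_r$ is joined by an edge to every vertex occurring in a simplex of the open star $St_K(e)$. Granting this, let $\tau$ be any simplex containing $e=(\gamma,\gamma')$; since the complex is flag and $\gamma_r$ is adjacent to each vertex of $\tau$, the set $\tau \cup \{\gamma_r\}$ is again a simplex, and it contains the $2$-simplex $\sigma=(\gamma,\gamma',\gamma_r)$. Hence $\tau$ is a face of a simplex of $K_1'$, so $St_K(e) \subseteq K_1'$. To produce the required edges I would use the defining bounds of the relative Rips complex together with the hypotheses $d>4\delta+2$ and $r>r_{BCP}(4d)$: the path associated to $e$ travels less than $3r$ inside $\gamma_0 K$, so $d_K(\gamma,\gamma_r)<3r$; any vertex $v$ lying in a simplex with $\gamma$ satisfies $d_K(v,\gamma)\le s$; and Lemma 5.1, via the bounded coset penetration property, pins the relevant exit points within $r$ of one another. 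The triangle inequality for $d_K$, with $s$ taken large relative to $r$, then gives $d_K(v,\gamma_r)\le s$, i.e.\ the edge $(v,\gamma_r)$.

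Next I would prove contractibility of $K_1'$ and deduce the homotopy equivalence. By construction every simplex of $K_1'$ has $\gamma_r$ among its vertices, so $K_1'$ is a simplicial cone with apex $\gamma_r$; the straight-line homotopy $H_t(x)=(1-t)x+t\gamma_r$ deformation retracts $K_1'$ onto $\{\gamma_r\}$. The decisive observation is that $K'\cap K_1'$ is again a cone with the same apex: if $\tau\in K'\cap K_1'$ then $\tau$ is a face of some simplex of $K_1'$ containing $\sigma$, so $\tau\cup\{\gamma_r\}$ is a simplex of $K_1'$, and since $\gamma_r\notin\{\gamma,\gamma'\}$ while $\tau$ does not contain the edge $e$, neither does $\tau\cup\{\gamma_r\}$; hence $\tau\cup\{\gamma_r\}\in K'\cap K_1'$. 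Thus the same homotopy $H_t$ retracts $K'\cap K_1'$ onto $\{\gamma_r\}$. Writing $i:K'\cap K_1'\hookrightarrow K_1'$ for the inclusion and $c:K_1'\to K'\cap K_1'$ for the constant map at $\gamma_r$, the two cone retractions give $i\circ c\simeq \mathrm{id}_{K_1'}$ and $c\circ i\simeq \mathrm{id}_{K'\cap K_1'}$, so $i$ is a homotopy equivalence.

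The topological argument of the previous paragraph is formal; the genuine obstacle is the containment $K_1\subseteq K_1'$, which really uses the bounded coset penetration property and the interplay of the constants $r,d,s$ to guarantee that $\gamma_r$ sees every vertex of the star of $e$. Lemma 5.1 is what I would invoke to control the finitely many exit vertices simultaneously and place them, together with $\gamma_r$, in a common simplex.
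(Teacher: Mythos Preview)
The paper does not supply its own proof of this lemma; it is quoted from Dahmani (cited there as Corollary~5.3) and used as a black box, so there is nothing in the present paper to compare your argument against. Your two-step plan---first establish $K_1\subseteq K_1'$ by showing that $\gamma_r$ is adjacent to every vertex in the star of $e$ and invoking the flag condition, then observe that both $K_1'$ and $K'\cap K_1'$ are cones with apex $\gamma_r$---is the natural route and matches the shape of Dahmani's original argument.

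Two points to tighten. First, it is not literally true that every simplex of $K_1'$ contains $\gamma_r$: faces of the defining simplices need not. What is true, and what your straight-line retraction actually uses, is that $K_1'$ is a union of \emph{closed} simplices each containing $\gamma_r$; likewise for $K'\cap K_1'$ once you have checked it is closed under adjoining $\gamma_r$ (your verification of this step is correct). Second, your triangle-inequality estimate for the containment is too optimistic as written: from $d_K(v,\gamma)\le s$ and a bound on $d(\gamma,\gamma_r)$ you obtain at best $d_K(v,\gamma_r)\le s+3r$, which does not give a type-one edge in $Rips_{r,d,s}$ (and the ``$3r$'' bound is in the ambient word metric, not in $d_K$). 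The honest argument produces instead a type-two edge $(v,\gamma_r)$ by concatenating the short in-coset segment from $v$'s exit point to $\gamma_r$ with a sub-path of a relative geodesic associated to $(v,\gamma')$, and then checking that the resulting path still meets the relative-length and coset-penetration bounds; BCP (through Lemma~5.1 here) is exactly what forces the various exit points to sit within $r$ of one another so that these bounds survive. You gesture at this in your final paragraph, but the actual edge you need is of the second kind, not the first.
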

\begin{lemma}\label{lemma_3}
     $K$ is homotopic to a sub-complex $\Bar{K}$ of $Rips_{r,d,s}(G,H)$ such that $\bar{K}^{(0)}\subset K^{(0)}$ and the endpoints of all edges exiting $\gamma_{0}Rips_{s}(H)$ in $\Bar{K}$ belong to the simplex $\sigma_{r}$ in $\gamma_{0} Rips_{s}(H)$. 
\end{lemma}
\begin{proof}
     By Lemma \ref{lemma_1}, vertices corresponding to reduced edges are already contained in a simplex. By applying the homotopy given in Lemma \ref{lemma_2} for each edge exiting $\gamma_{0}Rips_{s}(H)$, we obtain a subcomplex $\Bar{K}$ with the desired properties. 
\end{proof}

\begin{lemma}\label{lemma_4}\cite[Lemma 5.4]{Dahmani}
$K$ is homotopic to a finite sub-complex $K''$ such that $K''\cap \gamma_{0} Rips_{s}(H) \subset \sigma_{r}$ and no edge starting at $\sigma_{r}$ is associated to a path which travels more than distance $r$ in a coset.  
\end{lemma}
\begin{lemma}\label{lemma_5}\cite[Lemma 5.7]{Dahmani}
    Let $\tilde{K}$ be a sub-complex such that any edge exiting from $\gamma_{0} Rips_{s}(H)$ starts at a point in a simplex $\sigma$ and is associated to a path traveling no more than $r$ in each coset. Then, there is a  there is a simplicial homotopy of $K''$ that fixes each vertex outside of $\gamma_{0} Rips_{s}(H)$ and sends each vertex in $\gamma_{0} Rips_{s}(H)$ to a vertex $\gamma'$ such that $\hat{d}(\gamma',\gamma_{b})<\hat{d}(\gamma_{0},\gamma_{b})$. 
\end{lemma}
We now have all the tools to prove Theorem \ref{main}. 
\begin{proof}[Proof of Theorem~\ref{main}]:
    Let $d>4\delta+2$, $r>r_{BCP}(4d)$. We show that the filtration $Rips(G)=\bigcup_{(r,d,s)}Rips_{r,d,s}(G,H)$ is $\Tilde{H}_{k}$-essentially trivial. 
Let $\alpha$ be a non-trivial element of $H_{k}(Rips_{r,d,s}(G,H))$ and let $\sigma$ be a $k$-cycle representing $\alpha$. Suppose $\sigma$ is supported on $N$ cosets of $H$. Let $K$ be the sub-complex spanned by the simplices in $\alpha$. One can assume $K$ to be connected without loss of generality. Fix a base-point $\gamma_b \in K^{(0)}$ and let $\gamma_0\in K^{(0)}$ be such that it maximizes the relative distance from $\gamma_{b}$. By using Lemma \ref{lemma_3}, we have that $K$ is homotopic to a complex $K_1$ such that the endpoints of all edges exiting $K_1$ are contained in a simplex $\sigma_{r}$. Let $T$ be the sub-complex obtained by adding the simplex $\sigma_{r}$ and all its faces to $K_1$. $T$ can be expressed as the union of two sub-complexes $A$ and $B$, where $A=T\cap\gamma_{0}Rips_{s}(H)$ and $B=(T \symbol{92} \gamma_{0}Rips_{s}(H) )\cup \sigma_{r}$. Note that, $A\cap B=\sigma_{r}$. The Mayer-Vietoris sequence,
$$\cdots \rightarrow H_{k}(\sigma_{r})\rightarrow H_{k}(A)\oplus H_{k}(B) \rightarrow H_{k}(T) \rightarrow H_{k-1}(\sigma_{r})\rightarrow \cdots $$
 yields an isomorphism  $\tilde{H}_{k}(T)\cong\tilde{H}_{k}(A)\oplus\tilde{H}_{k}(B)$ as $\sigma_{r}$ is a simplex and $H_{i}(\sigma_{r})=\{0\} $ for all $i$.  Thus $\alpha$ can be written as $\alpha=\alpha_1+\alpha'$ such that $\alpha_1\in\tilde{H}_{k}(A)$ and $\alpha'\in\tilde{H}_{k}(B)$. Let $\theta \in Z_{k}(B)$ be a cycle representing $\alpha'$ and let $K_{2}$ be the subcomplex spanned by simplices in $\theta$. By using Lemma \ref{lemma_4} we get a subcomplex $K_3$ homotopic to $K_2$ such that no edges starting at $\sigma_r$ are associated to a path that travels more than distance $r$ in a coset. We are now in a position to use Lemma \ref{lemma_5}. We get a subcomplex $K_4$ homotopic to $K_3$ such $K_{4}$ lies entirely outside of $\gamma_{0}H$. Thus, the number of cosets in the support of $K_{4}$ that maximize the relative distance from $\gamma_{b}$ is strictly less than the number of cosets in the support of $K$ that maximize the relative distance from $\gamma_{b}$. One can repeat these steps each time replacing $\alpha$ by $\alpha'$ and $K$ by the complex $K_{4}$ obtained at the end of the previous step. After finitely many steps the relative diameter of $K$ will become zero in which case the cycle $\alpha'$ will be supported on a single coset.  Thus one can inductively write $\alpha=\alpha_1+\cdots +\alpha_N$ where each $\alpha_i$ is supported on some coset $\gamma_i H$.  
By assumption, $H$ is of type $FP_n$. Hence by Theorem \ref{Brown}, the filtration $Rips(H)=\bigcup_{r} Rips_{r}(H)$ is $\Tilde{H}_k$-essentially trivial. Thus there exists an $s'>s$ such that the inclusion map $i_{s,s'}:Rips_{s}(H)\rightarrow Rips_{s'}(H)$ induces a trivial map on the homology. The subcomplex spanned by each coset in $Rips_{r,d,s}(G,H)$ is isomorphic to $Rips_{s}(H)$ by Remark \ref{copy} . Let $j_{s,s'}:Rips_{r,d,s}(G,H)\rightarrow Rips_{r,d,s'}(G,H)$  be the canonical inclusion map. Then $\Tilde{H}(j_{s,s'})(\alpha)=\sum \Tilde{H}(j_{s,s'})(\alpha_i)=0$. \par

\end{proof} 

 \section{An Application}
In this section we apply Theorem \ref{main} to prove corollary \ref{humecordes}. 
 We will need the following result from \cite{Cordes2016RelativelyHG}. 
 \begin{theorem}\cite[Theorem 1.6]{Cordes2016RelativelyHG}
     Let $\mathcal{H}$ be a collection of groups each of which has finite stable dimension or are non-relatively hyperbolic (in the sense of \cite{Behrstock2005ThickMS}). There is an infinite family of 1-ended groups $\{G_n\}_n$ which are hyperbolic relative to $\mathcal{H}$ such that $G_i$ is not quasi-isometric to $G_j$ for $i\neq j$. 
 \end{theorem}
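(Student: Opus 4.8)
The plan is to distinguish the members of the family by a single quasi-isometry invariant that can be made to take infinitely many values while the peripheral collection is held fixed at $\mathcal{H}$. The hypothesis singles out \emph{stable dimension} $\mathrm{sdim}$ (the supremum of the asymptotic dimensions of the stable, i.e.\ uniformly Morse, subsets of the group), which is a quasi-isometry invariant: if $\mathrm{sdim}(G_i)\neq\mathrm{sdim}(G_j)$ then $G_i$ and $G_j$ cannot be quasi-isometric. It therefore suffices to build, for each $n$, a one-ended group $G_n$ that is hyperbolic relative to $\mathcal{H}$ and for which $n\mapsto\mathrm{sdim}(G_n)$ is injective; recall also that the peripheral structure is itself a quasi-isometry invariant, so any distinction among the $G_n$ must come from the hyperbolic (coned-off) geometry rather than from $\mathcal{H}$.

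First I would establish a computation of $\mathrm{sdim}(G)$ for a group $G$ hyperbolic relative to $\mathcal{H}$. A stable subset of $G$ either stays coarsely inside a single peripheral coset or projects to a quasigeodesic family in the coned-off Cayley graph, which is hyperbolic; this yields a bound of the form $\mathrm{sdim}(G)=\max\{c_{\mathcal{H}},\, d_{\mathrm{hyp}}(G)\}$, where $c_{\mathcal{H}}$ is the contribution of the peripherals and $d_{\mathrm{hyp}}(G)$ that of the hyperbolic geometry. The two-case hypothesis on $\mathcal{H}$ is exactly what forces $c_{\mathcal{H}}$ to be a fixed finite constant: if $H$ has finite stable dimension its contribution is bounded outright, while if $H$ is non-relatively hyperbolic then (by the rigidity of peripheral structure) it produces no unbounded family of mutually divergent Morse directions, so again its contribution is controlled. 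Thus the peripheral term is the same finite number for every group in the family.

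Next I would realize $d_{\mathrm{hyp}}$ as a free parameter. Fix hyperbolic groups $\Gamma_n$ with $\mathrm{sdim}(\Gamma_n)\to\infty$ and pairwise distinct (for instance fundamental groups of closed hyperbolic manifolds of growing dimension, whose asymptotic, hence stable, dimensions grow), and form $G_n$ as a relatively hyperbolic combination of $\Gamma_n$ with the members of $\mathcal{H}$ — concretely, by amalgamating or coning each $H\in\mathcal{H}$ onto $\Gamma_n$ along an infinite malnormal quasiconvex subgroup, so that $G_n$ is hyperbolic relative to exactly $\mathcal{H}$ while its cusped space carries the geometry of $\Gamma_n$. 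For $n$ past the fixed constant $c_{\mathcal{H}}$ the computation gives $\mathrm{sdim}(G_n)=\mathrm{sdim}(\Gamma_n)$, so the values are distinct and the $G_n$ are pairwise non-quasi-isometric.

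The main obstacle is securing \emph{one-endedness} at the same time as the prescribed peripheral structure and growing stable dimension. The tempting shortcut — taking the free product of $\Gamma_n$ with the members of $\mathcal{H}$, which is indeed hyperbolic relative to $\mathcal{H}$ since the hyperbolic factor is absorbed — fails, because such a free product splits over the trivial subgroup and is therefore infinitely-ended. The combination must instead be glued over \emph{infinite} quasiconvex malnormal subgroups, and I would then need to verify, using Stallings' theorem together with the structure theory of relatively hyperbolic groups, that $G_n$ admits no nontrivial splitting over a finite subgroup and so is one-ended. The remaining technical point is the upper bound $\mathrm{sdim}(G_n)\le\mathrm{sdim}(\Gamma_n)$ in the stable-dimension computation, i.e.\ showing every stable subset of $G_n$ is either trapped in a peripheral coset or quasi-isometrically shadowed by its projection to the hyperbolic part; this is where the Morse-geodesic analysis of the relatively hyperbolic structure does the real work.
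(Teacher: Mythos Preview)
The paper does not prove this theorem. It is imported wholesale from \cite{Cordes2016RelativelyHG} and used only as a black box to feed the subsequent application; there is no argument in the present paper to compare your attempt against.

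For what it is worth, your outline is in the spirit of the cited Cordes--Hume result: stable dimension is indeed the quasi-isometry invariant that separates the $G_n$, and the family is produced by attaching the fixed peripherals to hyperbolic ``cores'' whose dimension grows. Two places where your sketch is soft and would need genuine work before it became a proof: (i) your handling of the non-relatively-hyperbolic alternative is vague --- the assertion that an NRH peripheral ``produces no unbounded family of mutually divergent Morse directions'' is not a theorem, and NRH groups can have rich Morse geometry, so the actual reason this hypothesis bounds the peripheral contribution to $\mathrm{sdim}$ is more delicate than you indicate; (ii) your proposed construction by amalgamating along an infinite malnormal quasiconvex subgroup presupposes that every $H\in\mathcal{H}$ contains such a subgroup and that the combination theorem returns exactly $\mathcal{H}$ as the peripheral structure, neither of which is automatic. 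You are right that securing one-endedness while keeping the peripheral structure fixed is the crux.
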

 \begin{proof}[Proof of Corollary \ref{humecordes}]:
     Let $H$ be a group that is of type $FP_n$ and not of type $FP_{n+1}$. Such a group has been constructed in \cite{Bestvina1997MorseTA}. Moreover $H$ can be chosen such that it is a nonamenable subgroup of a RAAG and thus has finite asymptotic dimension. In particular, it satisfies the hypothesis of Theorem \ref{humecordes}. Let $G_n$ be a sequence of one-ended groups that are relatively hyperbolic with respect to $\mathcal{H}=\{H\} $ such that $G_i$ and $G_j$ are quasi-isometrically distinct for $i\neq j$. Each $G_n$ is of type $FP_n$ but not of type $FP_{n+1}$.  
 \end{proof}
 
\section*{Acknowledgements}
We would like to thank Ian Leary for pointing us in the right direction during a conversation. We would like to thank David Hume and Mark Hagen for several helpful discussions. We would like to thank the anonymous referee for their detailed comments on a previous version of this paper. 

\bibliographystyle{plain}
\bibliography{main}

\begin{thebibliography}{10}

\bibitem{ABELS198777}
Herbert Abels and Kenneth~S. Brown.
\newblock Finiteness properties of solvable s-arithmetic groups: An example.
\newblock {\em Journal of Pure and Applied Algebra}, 44(1):77--83, 1987.

\bibitem{Alonso1994FinitenessCO}
Juan~M. Alonso.
\newblock Finiteness conditions on groups and quasi-isometries.
\newblock {\em Journal of Pure and Applied Algebra}, 95:121--129, 1994.

\bibitem{Behrstock2005ThickMS}
Jason~A. Behrstock, Cornelia Drutu, and Lee Mosher.
\newblock Thick metric spaces, relative hyperbolicity, and quasi-isometric rigidity.
\newblock {\em Mathematische Annalen}, 344:543--595, 2005.

\bibitem{Bestvina1997MorseTA}
Mladen Bestvina and Noel Brady.
\newblock Morse theory and finiteness properties of groups.
\newblock {\em Inventiones mathematicae}, 129:445--470, 1997.

\bibitem{Bieri1976NormalSI}
Robert~B Bieri.
\newblock Normal subgroups in duality groups and in groups of cohomological dimension 2.
\newblock {\em Journal of Pure and Applied Algebra}, 7:35--51, 1976.

\bibitem{bowditch2012relatively}
Brian~H Bowditch.
\newblock Relatively hyperbolic groups.
\newblock {\em International Journal of Algebra and Computation}, 22(03):1250016, 2012.

\bibitem{Brown1987FinitenessPO}
Kenneth~S. Brown.
\newblock Finiteness properties of groups.
\newblock {\em Journal of Pure and Applied Algebra}, 44:45--75, 1987.

\bibitem{Bumagin2003TheCP}
Inna Bumagin.
\newblock The conjugacy problem for relatively hyperbolic groups.
\newblock {\em Algebraic \& Geometric Topology}, 4:1013--1040, 2003.

\bibitem{Cordes2016RelativelyHG}
Matthew Cordes and David Hume.
\newblock Relatively hyperbolic groups with fixed peripherals.
\newblock {\em Israel Journal of Mathematics}, 230:443--470, 2016.

\bibitem{Dahmani2011HyperbolicallyES}
F.~Dahmani, V.~Guirardel, and D.~Osin.
\newblock {\em Hyperbolically Embedded Subgroups and Rotating Families in Groups Acting on Hyperbolic Spaces}.
\newblock Memoirs of the American Mathematical Society. American Mathematical Society, 2017.

\bibitem{Dahmani}
François Dahmani.
\newblock Classifying spaces and boundaries for relatively hyperbolic groups.
\newblock {\em Proceedings of the London Mathematical Society}, 86:666--684, 2003.

\bibitem{farb1998relatively}
Benson Farb.
\newblock Relatively hyperbolic groups.
\newblock {\em Geometric and functional analysis}, 8(5):810--840, 1998.

\bibitem{gromov1987hyperbolic}
Mikhael Gromov.
\newblock {\em Hyperbolic groups}.
\newblock Springer, 1987.

\bibitem{osin2005asymptotic}
Denis Osin.
\newblock Asymptotic dimension of relatively hyperbolic groups.
\newblock {\em International Mathematics Research Notices}, 2005(35):2143--2161, 2005.

\bibitem{osin2006relatively}
Denis~V Osin.
\newblock {\em Relatively Hyperbolic Groups: Intrinsic Geometry, Algebraic Properties, and Algorithmic Problems: Intrinsic Geometry, Algebraic Properties, and Algorithmic Problems}, volume 843.
\newblock American Mathematical Soc., 2006.

\bibitem{Papasoglu2002QuasiisometriesBG}
P.~Papasoglu and Kevin Whyte.
\newblock Quasi-isometries between groups with infinitely many ends.
\newblock {\em Commentarii Mathematici Helvetici}, 77:133--144, 2002.

\bibitem{SantosRego_2022}
Yuri Santos~Rego.
\newblock On the finiteness length of some soluble linear groups.
\newblock {\em Canadian Journal of Mathematics}, 74(5):1209–1243, 2022.

\bibitem{Shalom2004HarmonicAC}
Yehuda Shalom.
\newblock Harmonic analysis, cohomology, and the large-scale geometry of amenable groups.
\newblock {\em Acta Mathematica}, 192:119--185, 2004.

\bibitem{Stallings}
John Stallings.
\newblock A finitely presented group whose 3-dimensional integral homology is not finitely generated.
\newblock {\em American Journal of Mathematics}, 85(4):541--543, 1963.

\bibitem{Wall}
C.T.C. Wall.
\newblock Finiteness conditions for cw-complexes.
\newblock {\em Annals of Mathematics}, 81(1):56--69, 1965.

\bibitem{Asli}
Asli Yaman.
\newblock A topological characterization of relatively hyperbolic groups.
\newblock {\em J. Reine Angew. Math}, 566:41--89, 2004.

\end{thebibliography}

\textsc{School of Mathematics, University of Bristol, Bristol, BS8 1UG} 
\par\nopagebreak
\textit{E-mail address}: \texttt{cr22307@bristol.ac.uk}
\end{document}